\theoremstyle{cupproof}
\newtheorem*{proof}{Proof}
\numberwithin{equation}{section}
\newcommand{\N}{\mathbb N}
\newcommand{\Z}{\mathbb Z}
\DeclareMathOperator{\Pic}{Pic}
\DeclareMathOperator{\Hom}{Hom}
\DeclareMathOperator{\Spec}{Spec}
\newcommand{\hop}{\vskip .3cm\noindent} 
\theoremstyle{definition}
\newtheorem{thm}{Theorem}[section] 
\newtheorem{cor}[thm]{Corollary}
\newtheorem{prop}[thm]{Proposition}
\newtheorem{lem}[thm]{Lemma}
\newtheorem{defi}[thm]{Definition}
\newtheorem{rema}[thm]{Remark}
\newtheorem{exam}[thm]{Example}
\newtheorem{Counter-example}[thm]{Counter-example}
\newcommand{\setword}[2]{%
  \phantomsection
  #1\def\@currentlabel{\unexpanded{#1}}\label{#2}%
}
\begin{document}

\begin{Frontmatter}

\title[Extending torsors under quasi-finite flat group schemes]{Extending torsors under quasi-finite flat group schemes}

\author{Sara Mehidi}

\authormark{S. Mehidi}


\keywords[Keywords]{Torsors \sep curves\sep  Jacobian of curves\sep  Néron models of Jacobians\sep  log schemes; Kummer log flat torsors\sep  log Picard functor\sep  finite and quasi-finite group schemes.}

\amsclassification[2020 Mathematics subject classification]{14A15, 14A21, 14B25, 14L15.}

\abstract{Let $R$ be a discrete valuation ring of field of fractions $K$ and of residue field $k$
of characteristic $p > 0$.\\
In an earlier work, we studied the question of extending torsors over $K$-curves into torsors over
$R$-regular models of the curves in the case when the structural $K$-group scheme of the torsor admits a finite flat model over $R$. In this paper, we first give a simpler description of the problem in the case where the curve is semistable using recent work in \cite{Holmes} and \cite{MW}. Secondly, if $R$ is assumed to be Henselian and Japanese, we solve the problem of extending torsors by combining our previous work together with results in \cite{Antei} and \cite{Pedro}, including the case where the structural group does not admit a finite flat $R$-model.}

\end{Frontmatter}

\section{Introduction}

All over this paper, $R$ denotes a discrete valuation ring with field of fractions $K$ and residue field $k$ of characteristic $p >0$. In addition, schemes and log schemes are supposed to be locally noetherian.\\

Let $S$ be a regular scheme and $U \subseteq S$ a dense open subset. Let $f: X \to S$ be a finite flat morphism of schemes, unramified over $U$. The Zariski-Nagata purity theorem, known as \textit{purity of the branch locus}, says that the closed subset of $S$ where $f$ ramifies is either empty or of pure codimension $1$. On the other hand, given a finite étale group scheme $G/S$, and an fppf $G_U$-torsor $X \to U$ (hence an étale torsor by fppf descent), if it extends into an fppf $G$-torsor over the whole $S$, it needs to be étale, hence unramified. But the purity theorem suggests that such an extension may not exist in general. Nevertheless, if the extension of the torsor $X \to U$ ramifies outside $U$, and if the ramification is tame, there might be a way to lift it into a \textit{log} torsor over $S$. Indeed, assume that $D:=S\backslash U$ is a normal crossing divisor, so that one can endow $S$ with the divisorial log structure induced by $D$. Then, \textit{logarithmic torsors} over $X$ are, roughly speaking, tamely ramified over $D$. This approach of extending torsors into log torsors has been followed in \cite{Sara}, and the main purpose of this paper is to enhance their results. The paper is divided into two independent parts, which we explain below. \\

\textbf{Part I:}

Let $C$ be a smooth projective curve over $K$, endowed with a $K$-point $Q$ and let $J$ denote its Jacobian variety. Let $\mathcal{C}$ be a regular model of $C$ over $R$, such that its special fiber is a normal crossing divisor, and endow $\mathcal{C}$ with the canonical log structure induced by this divisor; let $\mathcal{Q}$ denote the $R$-section extending $Q$ by properness. Let $G$ be a finite commutative group scheme over $K$. It is well-known that the Jacobian variety classifies fppf commutative torsors, which can be rephrased through the one-to-one correspondence (cf. \cite[Lemme 2.3]{Sara}): 
\begin{equation*}
 H_{fppf}^1(C,Q,G) \simeq \Hom(G^D,J) \label{Jac}  \tag{$\star$} 
\end{equation*}

where the group on the left is the first cohomology group classifying fppf pointed $G$-torsors (relatively to $Q$) over $C$, and $G^D$ is the Cartier dual of $G$. It is shown in \cite[Remark 1.11]{Sara} that one has a similar correspondence for log torsors over $\mathcal{C}$. Indeed, if $\mathcal{G}$ denotes a finite flat commutative $R$-group scheme, one has a one-to-one correspondence:
$$ H_{klf}^1(\mathcal{C},\mathcal{Q},\mathcal{G}) \simeq \mathrm{Hom}(\mathcal{G}^D,\mathrm{Pic}^{log}_{\mathcal{C}/R})  $$

where the group on the left is the first cohomology group classifying Kummer log flat pointed  $\mathcal{G}$-torsors (relatively to $\mathcal{Q}$) over $\mathcal{C}$, and $\mathrm{Pic}^{log}_{\mathcal{C}/R}$ is the relative log Picard functor of $\mathcal{C}/R$. An immediate consequence of this is that, given a pointed fppf $G$-torsor over $C$, it extends into a log torsor over $\mathcal{C}$ if and only if there exists a finite flat $R$-model $\mathcal{G}$ of $G$ such that the $K$-morphism $G^D \to J$ from (\ref{Jac}) corresponding to the torsor extends into an $R$-morphism $\mathcal{G}^D \to \mathrm{Pic}^{log}_{\mathcal{C}/R}$. Moreover, if $\mathcal{J}$ is the Néron model of $J$ over $R$, it is shown in the same paper that the canonical map $J \hookrightarrow \mathrm{Pic}^{}_{{C}/R}$ extends uniquely into a map $\mathcal{J} \to \mathrm{Pic}^{log}_{\mathcal{C}/R}$. In particular, if the morphism $G^D \to J$ extends into a morphism $\mathcal{G}^D \to \mathcal{J}$, the torsor extends into a $\mathcal{G}$-log torsor over $\mathcal{C}$. In this paper, we want to invest the converse. Given that $\mathcal{J}$ is a smooth scheme, it is a \textit{nicer} object to work with than the log Picard functor. 
 Using the results of \cite{MW} on log curves, we give a partial answer to this question:\\

\textbf{Corollary 3.4.~~}\textsl{~~Let $C$ be a smooth projective semistable and geometrically connected curve endowed with a $K$-point. Let $\mathcal{C}$ be an $R$-regular model of $C$ with normal crossing special fiber and endowed with the divisorial log structure (cf. example \ref{log sch}(3)). Let $G$ be a finite commutative $K$-group scheme and $\mathcal{G}$ a finite flat $R$-model of $G$. Then a pointed fppf $G$-torsor over $C$ extends into a pointed $\mathcal{G}$-log torsor over $\mathcal{C}$ if and only if the $K$-morphism $G^D \to J$ associated to the generic torsor (cf. (\ref{Jac})) extends into an  $R$-morphism $\mathcal{G}^D \to \mathcal{J}$.}\\


\textbf{Part II:}

In the second part of this paper, we would like to drop the assumption that $G$ admits a finite flat $R$-model. Indeed, there exist groups which do not admit such a model (cf. see Example \ref{CE}). However, if $G$ is finite, then what is true in general is that it admits a quasi-finite flat $R$-model (cf. \cite[Theorem 3.7]{Antei}). In the latter, the authors took advantage of this to obtain partial answers to the problem of extending finite torsors. In particular, they showed that there exists a modification (a Néron blow-up) of the regular model of the curve over which the torsor extends under some quasi-finite flat group scheme. On the other hand, under additional assumptions on $R$, it is shown in \cite{Pedro} that a torsor under a quasi-finite flat group scheme reduces into a torsor under a finite flat group scheme. Combining these two results, together with our previous work, we prove the following: \\

\textbf{Theorem 4.4.~~}\textsl{Let $C$ be a smooth projective and geometrically connected $K$-curve with a $K$-point $Q$. Let $\mathcal{C}$ be an $R$-regular model of $C$, and $G$ a finite commutative $K$-group scheme. Let $Y \to C$ be an fppf pointed $G$-torsor (relatively to $Q$). Then, there exists a quasi-finite flat group scheme $\mathcal{G}$ over $R$ with generic fiber $G$, and an fppf pointed $\mathcal{G}$-torsor $\mathcal{Y} \to \mathcal{C}$ that extends the $G$-torsor $Y \to C$.}\\


\section{Preliminaries}\label{prelim}
\subsection{Kummer log flat torsors}\label{tor log def}

\subsubsection{Log schemes.}  Let $({X},\mathcal{O}_X)$ denote a scheme. A logarithmic (log) structure on $X$ is the data of a sheaf of monoids  $M_X$ on $X_{\textrm{\'et}}$, together with a morphism $\alpha_X: M_X \to \mathcal{O}_X$ such that $\alpha_X^{-1}(\mathcal{O}_X^{\times}) \simeq \mathcal{O}_X^{\times}$. A scheme endowed with a log structure is said to be a logarithmic (log) scheme.\\
A morphism of log schemes is a morphism $f : X \to Y$ of the underlying schemes, together with a morphism $f^{-1}M_Y \to M_X$ such that the diagram

\[
\begin{tikzcd}
f^{-1}M_Y \arrow{r}{} \arrow[swap]{d}{f^{-1}\alpha_Y} & M_X \arrow{d}{\alpha_X} \\
f^{-1}\mathcal{O}_Y  \arrow{r}{} & \mathcal{O}_X
\end{tikzcd}
\]
commutes.

\subsubsection{Charts.}
If $\underline{P}$ is the constant sheaf associated to a monoid $P$, and if we are given a morphism of sheaves $\underline{P} \to \mathcal{O}_X$, it induces a unique log structure on $X$ \cite[Proposition 1.1.5]{Ogus}. If $(X,M_X)$ is a log scheme, it is said to have a \textit{chart} on $P$ if the log structure induced by $P$ is isomorphic to $M_X$. All the log schemes in this paper are supposed to admit charts étale locally. Furthermore, if $P$ is fine (finitely generated and integral, i.e. $P \hookrightarrow P^{gp}$) and saturated (i.e. if $a \in P^{gp}$ such that $a^n \in P$ for some non-zero integer $n$, then $a \in P$), $X$ is said to be a \textit{fine and saturated} log scheme; we refer to \cite{Ogus} for further details.\\

\subsubsection{Inverse image log structure and strict morphisms.}

If $Y$ is a log scheme with underlying scheme $\underline{Y}$, and $f: X \to \underline{Y}$ is a morphism of schemes, then the composition $f^{-1}M_Y \xrightarrow{f^{-1}\alpha_Y}  f^{-1}\mathcal{O}_Y \to \mathcal{O}_X$ is a prelog structure on $X$, and induces \textit{the inverse image log structure} on $X$ that we denote by $f^{*}M_Y$.\\
If $f: X \to Y$ is a morphism of log schemes, the map $f^{-1}M_Y \to M_X$ factors canonically through $f^{*}M_Y \to M_X$.\\
The morphism of log schemes $f:X \to Y$ is said to be strict if the induced map $f^*M_Y \to M_X$ is an isomorphism.\\

\subsubsection{Direct image log structure.}
If $f : X \to Y$ is a morphism of schemes and $\alpha_X : M_X \to \mathcal{O}_X$ is a log structure on $X$, then the natural map $\beta$ in the diagram below
\[
\begin{tikzcd}
f_* M_X \times_{f_* \mathcal{O}_X}\mathcal{O}_Y \arrow{r}{\beta} \arrow[swap]{d}{} & \mathcal{O}_Y \arrow{d}{} \\
f_* M_X  \arrow{r}{f_*\alpha_X} & f_* \mathcal{O}_X
\end{tikzcd}
\]
is a log structure on $Y$, called \textit{the direct image log structure} induced by $\alpha_X$. We denote it by $f^{log}_* \alpha_X: f^{log}_* M_X \to \mathcal{O}_Y$.

\begin{exam}\label{log sch}\hop
\begin{enumerate}
\item Let $X$ be a scheme. $M_X:=\mathcal{O}_X^{\times}$ defines a fine and saturated log structure on $X$ called the trivial log structure. $X$ has a chart on the monoid $\{1\}$. If $X$ is a log scheme, the largest Zariski open subset of $X$ (possibly empty) on which the
log structure is trivial is called the \textit{open of triviality} of $X$.

\item Let $X$ be a regular scheme and let $j: U \hookrightarrow X$ be a dense open subset whose complementary is a normal crossing divisor $D$ on $X$. Then the sheaf
$$M_X(V):= \{s \in \mathcal{O}_X(V) | s_{|V \cap U} \in \mathcal{O}_{V \cap U}(V \cap U)^{\times}\} \hookrightarrow \mathcal{O}_X(V)$$
defines the \textit{divisorial log structure} on $X$. \\
It is the same as the direct image log structure on $X$ of the trivial log structure on $U$, i.e. $j^{log}_*\mathcal{O}_U^{\times}$ (cf. \cite[\S 1.5]{Kato}).
It is a fine and saturated log structure on $X$.\\

 Note that $U$ is the open of triviality of the divisorial log structure on $X$. In particular, $\Spec(R)$ can be seen as a fine and saturated log scheme with the log structure induced by $\Spec(k)$ seen as a divisor. $\Spec(R)$ has a chart on $\N$ given by $\N \to R; 1 \mapsto \pi$ , where $\pi$ is the uniformizer of $R$. More generally, if ${X}$ is a flat $R$-scheme such that its special fiber is a normal crossing divisor, then it can be seen as a fine and saturated log scheme with the log structure induced by its special fiber. The generic fiber ${X}_K$ is the open of triviality of the log structure. Furthermore, it has (étale) locally a chart on $\mathbb{N}^r$. \\

\item If $S$ is a fine and saturated log scheme, we say that $ {X} \to S$ is a log curve if it is a proper, integral (cf. \cite[Definition 2.3]{FFKato}), vertical\footnote{vertical means that the curve doesn't have marked points (cf. \cite[\S 1.8 (2)]{FKato}.)}, log smooth morphism of (fine and saturated) log schemes with connected and reduced geometric fibres of pure dimension $1$. Then, according to \cite{FKato}, the underlying
scheme of ${X}$ is a flat family of nodal curves over $S$ and one has an explicit description of the log structure on the geometric points of ${X}$ lying above geometric points of $S$ . This is a fine and saturated log structure on $X$. 
Conversely, if $S$ is the spectrum of a DVR $R$ and $K=\mathrm{Frac}(R)$, if $C/K$ is a semistable curve with $\mathcal{C}$ some regular model over $S$, there exist canonical log structures on $\mathcal{C}$ and $S$ making $\mathcal{C} \to S$ a log curve (cf. \cite[\S 3]{Olsson}). Furthermore, if $S$ is endowed with its divisorial log structure, and if the special fiber of $\mathcal{C}/S$ is a normal crossing divisor, tho so-mentioned canonical log structure on $\mathcal{C}$ agrees with the divisorial one (cf. \cite[\S 2 of proof of Lemma 2.2.5.1 and Theorem 2.4.1.3]{MW}).

\end{enumerate}

\end{exam}

\hop \hop

We consider here the category of fine and saturated log schemes, endowed with the \textit{Kummer log flat} topology (we sometimes write \textit{klf} to refer to this topology for simplicity). We refer to \cite{Kato} or \cite[\S 2.2]{Gill2} for the definition of this Grothendieck topology. A torsor in this category, defined with respect to the klf topology, is called a \textit{logarithmic torsor} (or a log torsor). The structural group of the torsor is always assumed to be endowed with the strict log structure (the inverse log structure of that of the base). Moreover, a Kummer log flat cover of a scheme endowed with the trivial log structure is just a cover for the fppf topology. So, in this paper, the category of schemes is endowed with the fppf topology.\\

\begin{exam}\cite[\S 1 ; 1.9.3]{Katoo}\hop
Let $A$ be a discrete valuation ring with uniformizer $\pi$. Assume that it contains a primitive $n$-th root of unity and that $n \in \N$ is invertible in $A$. We set $B:=A[\sqrt[n]{\pi}]$, $X:=\Spec(A)$ and $Y:=\Spec(B)$. We endow both these schemes with the divisorial log structure, making them into fine saturated log schemes. Let $G:=\mathrm{Aut}_A(B)=\mu_n\simeq \Z/n\Z$. Then $Y \to X$ is not an fppf $G$-torsor (because it is totally ramified while $G$ is unramified) but it is a klf (more precisely, Kummer log étale) torsor.
\end{exam}

\subsection{Extension of torsors under a finite flat group scheme}\label{previous work}
We recall briefly in this section the main results of \cite{Sara} on the problem of extending torsors under finite flat group schemes.  From now on, $\Spec(R)$ is endowed with the divisorial log structure. Let $(fs/R)_{klf}$ denote the category of fine and saturated log schemes over $R$ endowed with the Kummer log flat topology, and let $(Sch/R)_{fppf}$ be the category of schemes over $R$ endowed with the fppf topology. The latter can be viewed as a full subcategory of $(fs/R)_{klf}$ by endowing an $R$-scheme with the inverse log structure of that of $\Spec(R)$. We recall the following definitions:

\begin{defi}\label{Piclog}
\begin{enumerate}
\item  We define the following functor
\begin{align*}
   \mathbb{G}_{m,log,R}: (fs/R)_{klf} &\to (Ab)\\
    T  & \mapsto \Gamma(T,M_T^{gp}).
\end{align*}
which is a sheaf in the klf site \cite[Theorem 3.2]{Kato}. Note that generically, it is isomorphic to $\mathbb{G}_{m,K}$.

\item Let $C$ be a smooth projective $K$-curve with an $R$-regular model $\mathcal{C}$ endowed with the divisorial log structure. Using the embedding $(Sch/R)_{fppf} \hookrightarrow (fs/R)_{klf}$, consider the following functor
\begin{align*}
    (Sch/R)_{fppf} &\to (Sets)\\
    T  & \mapsto \{\mathbb{G}_{m,log,\mathcal{C}}-log~torsors~on~\mathcal{C}_{T} \}.
\end{align*}

The \textit{log Picard functor}, denoted by $\Pic^{log}_{\mathcal{C}/R}$, is defined to be the fppf sheaffification on $(Sch/R)_{fppf}$ of the previous functor.
Furthermore, it is clear that its generic fiber is $\Pic_{C/K}$, the usual relative Picard functor of $C/K$.
\end{enumerate}

\end{defi}

\begin{thm}\label{previ} \cite[Remark 1.11]{Sara} Let $C$ be a smooth projective and geometrically connected curve over $K$, endowed with a $K$-point $Q$, and let $J$ denote its Jacobian variety. Let $\mathcal{C}$ be an $R$-regular model of $C$ such that its special fiber is a normal crossing divisor. Endow $\mathcal{C}$ with the divisorial log structure and let $\mathcal{Q}$ be the $R$-section that extends $Q$ over $\mathcal{C}$. Let $G$ be a finite commutative $K$-group scheme with finite flat $R$-model $\mathcal{G}$ and let $\mathcal{G}^D$ denote its Cartier dual. We have a canonical isomorphism:
 $$H^1_{klf}(\mathcal{C},\mathcal{Q},\mathcal{G})\xrightarrow{\simeq} \Hom(\mathcal{G}^D, \Pic^{log}_{\mathcal{C}/R})$$
 where $H^1_{klf}(\mathcal{C},\mathcal{Q},\mathcal{G})$ denotes the cohomology group that classifies logarithmic $\mathcal{G}$-torsors over $\mathcal{C}$, pointed relatively to $\mathcal{Q}$. In particular, a pointed fppf $G$-torsor (relatively to $Q$) over $C$ extends into a pointed log $\mathcal{G}$-torsor (relatively to $\mathcal{Q}$) over $\mathcal{C}$ if and only if the associated $K$-morphism $G^D \to J$ (cf. (\ref{Jac})) extends into an $R$-morphism $\mathcal{G}^D \to \Pic^{log}_{\mathcal{C}/R}$.
\end{thm}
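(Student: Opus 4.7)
The plan is to transpose to the Kummer log flat setting the classical proof of the Jacobian correspondence $H^{1}_{fppf}(C,Q,G)\simeq\Hom(G^{D},J)$, replacing $\mathbb{G}_{m}$ everywhere by $\mathbb{G}_{m,log}$ and the Picard functor by its logarithmic analog $\Pic^{log}_{\mathcal{C}/R}$ of Definition \ref{Piclog}. The first step is a klf Cartier duality for $\mathcal{G}$ (viewed with the strict log structure): as klf sheaves on $\mathcal{C}$,
$$\mathcal{G}\simeq\mathcal{H}om_{klf}(\mathcal{G}^{D},\mathbb{G}_{m,log})\qquad\text{and}\qquad\mathcal{E}xt^{q}_{klf}(\mathcal{G}^{D},\mathbb{G}_{m,log})=0\text{ for }q\geq 1.$$
The first identity reduces to the classical $\mathcal{G}=\mathcal{H}om_{fppf}(\mathcal{G}^{D},\mathbb{G}_{m})$ since any group morphism from a finite flat group scheme into $\mathbb{G}_{m,log}$ takes torsion values and thus lands in $\mathbb{G}_{m}\subset\mathbb{G}_{m,log}$. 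The vanishing of higher $\mathcal{E}xt$-sheaves is the essential logarithmic input; I would check it étale locally on fs charts, combining the explicit description of klf covers (extraction of roots of log-monoid elements) with Kato's logarithmic Cartier duality.

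With this vanishing in hand, the local-to-global $\Ext$ spectral sequence degenerates and yields $H^{p}_{klf}(\mathcal{C},\mathcal{G})\simeq\Ext^{p}_{klf,\mathcal{C}}(\mathcal{G}^{D},\mathbb{G}_{m,log})$. Next I would invoke the adjunction spectral sequence for $\pi:\mathcal{C}\to\Spec R$,
$$E_{2}^{p,q}=\Ext^{p}_{klf,R}\bigl(\mathcal{G}^{D},R^{q}\pi_{*}\mathbb{G}_{m,log}\bigr)\Longrightarrow\Ext^{p+q}_{klf,\mathcal{C}}(\mathcal{G}^{D},\mathbb{G}_{m,log}),$$
together with $\pi_{*}\mathbb{G}_{m,log}=\mathbb{G}_{m,log,R}$ (a log enhancement of $\pi_{*}\mathcal{O}_{\mathcal{C}}=\mathcal{O}_{R}$, valid for $\mathcal{C}$ geometrically connected) and the identification of $R^{1}\pi_{*}\mathbb{G}_{m,log}$ with $\Pic^{log}_{\mathcal{C}/R}$ that is built into Definition \ref{Piclog}. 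The corresponding five-term exact sequence reads
$$0\to\Ext^{1}_{klf,R}(\mathcal{G}^{D},\mathbb{G}_{m,log,R})\to H^{1}_{klf}(\mathcal{C},\mathcal{G})\to\Hom_{R}(\mathcal{G}^{D},\Pic^{log}_{\mathcal{C}/R})\to\Ext^{2}_{klf,R}(\mathcal{G}^{D},\mathbb{G}_{m,log,R}).$$

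To conclude, the section $\mathcal{Q}:\Spec R\to\mathcal{C}$ splits $\pi$ and so furnishes a retraction of $\pi^{*}$ that identifies the left-hand $\Ext^{1}$ term with the image of the pullback $H^{1}_{klf}(\mathcal{C},\mathcal{G})\to H^{1}_{klf}(\Spec R,\mathcal{G})$ and annihilates the boundary map into $\Ext^{2}_{klf,R}$. Since $H^{1}_{klf}(\mathcal{C},\mathcal{Q},\mathcal{G})$ is by definition the kernel of restriction to $\mathcal{Q}$, the edge map then restricts to the claimed isomorphism $H^{1}_{klf}(\mathcal{C},\mathcal{Q},\mathcal{G})\xrightarrow{\sim}\Hom_{R}(\mathcal{G}^{D},\Pic^{log}_{\mathcal{C}/R})$; specializing to the generic fiber, where the log structure is trivial, recovers (\ref{Jac}) and gives the last assertion of the theorem by the universal property of the Picard functor. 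The main technical obstacle is the klf $\mathcal{E}xt$-vanishing in the first step; once available, the rest is a formal consequence of the Leray and rigidification formalism.
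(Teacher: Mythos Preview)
This theorem is not proved in the present paper: it is quoted from the author's earlier work \cite[Remark 1.11]{Sara} as part of the preliminaries in Section~\ref{previous work}, with no argument supplied here. There is therefore no proof in this paper to compare your proposal against.

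That said, your outline is exactly the expected logarithmic transposition of the classical proof of~(\ref{Jac}) and is the architecture one anticipates in the cited reference. The identification $\mathcal{G}=\mathcal{H}om_{klf}(\mathcal{G}^{D},\mathbb{G}_{m,log})$ is correct for the reason you give: the quotient $\mathbb{G}_{m,log}/\mathbb{G}_{m}$ is the sheaf $\overline{M}^{gp}$, which is torsion-free, so any homomorphism from the torsion sheaf $\mathcal{G}^{D}$ to $\mathbb{G}_{m,log}$ factors through $\mathbb{G}_{m}$. The use of the section $\mathcal{Q}$ to split $\pi$ and thereby kill the base contributions, reducing from $H^{1}_{klf}(\mathcal{C},\mathcal{G})$ to the pointed group, is likewise the standard rigidification step. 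The one place requiring genuine input is, as you correctly flag, the $\mathcal{E}xt^{1}$-vanishing: a clean route is via the short exact sequence $0\to\mathbb{G}_{m}\to\mathbb{G}_{m,log}\to\overline{M}^{gp}\to 0$, which reduces the question to the classical vanishing of $\mathcal{E}xt^{1}(\mathcal{G}^{D},\mathbb{G}_{m})$ together with a separate analysis of $\mathcal{E}xt^{1}(\mathcal{G}^{D},\overline{M}^{gp})$; neither piece is entirely formal, and this is indeed where the substance of the argument in \cite{Sara} resides.
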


\begin{prop}\label{PropN} \cite[Proposition 1.12 and Proposition 1.18]{Sara} With the assumptions of the previous theorem, if $\mathcal{J}$ is the Néron model of $J$ over $R$, then the closed immersion $J \hookrightarrow \Pic_{C/K}$ extends uniquely into an $R$-morphism $\mathcal{J} \to \Pic^{log}_{\mathcal{C}/R}$. In particular, if the associated $K$-morphism $G^D \to J$ of the generic torsor extends into an $R$-morphism $\mathcal{G}^D \to \mathcal{J}$, the torsor extends into a $\mathcal{G}$-log torsor over $\mathcal{C}$. Moreover, if $\mathcal{J}^0$ denotes the identity component of $\mathcal{J}$, the extended log torsor is fppf if and only if $\mathcal{G}^D \to \mathcal{J}$ factors through $\mathcal{J}^0$. 
\end{prop}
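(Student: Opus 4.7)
I will split the proof into three parts: uniqueness and existence of the extension $\mathcal{J}\to \Pic^{log}_{\mathcal{C}/R}$, the torsor-extension corollary, and the fppf criterion. For uniqueness, I observe that $\mathcal{J}$ is smooth over $R$ hence $R$-flat, so $J=\mathcal{J}_K$ is schematically dense in $\mathcal{J}$; any two $R$-morphisms $\mathcal{J}\rightrightarrows \Pic^{log}_{\mathcal{C}/R}$ that agree on $J$ must therefore agree globally, provided $\Pic^{log}_{\mathcal{C}/R}$ (pointed by $\mathcal{Q}$) is a separated $R$-sheaf. This separatedness is verified via the valuative criterion applied to the functorial description of $\mathbb{G}_{m,log}$ on the log regular scheme $\mathcal{C}$.

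For existence I would work on functors of points. Let $T$ be a smooth $R$-scheme. By the Néron mapping property one has $\mathcal{J}(T)=J(T_K)$, so a $T$-point of $\mathcal{J}$ is represented, fppf-locally on $T$, by a line bundle $L$ on $C_{T_K}$ rigidified along $\mathcal{Q}_{T_K}$. Since $\mathcal{C}$ is $R$-regular and $T$ is smooth, $\mathcal{C}_T$ is regular, and $L$ extends to an honest line bundle $\widetilde{L}$ on $\mathcal{C}_T$ by taking closures of Cartier divisor representatives. Any two such extensions differ by a Cartier divisor supported on the special fibre of $\mathcal{C}_T$, whose local equations lie in the log monoid $M_{\mathcal{C}_T}$ and therefore become units in $\mathbb{G}_{m,log,\mathcal{C}_T}$; hence $\widetilde{L}$ defines an unambiguous element of $\Pic^{log}_{\mathcal{C}/R}(T)$, and fppf-sheafifying produces the sought morphism $\mathcal{J}\to \Pic^{log}_{\mathcal{C}/R}$. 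This is the main obstacle: proving that the ambiguity inherent in the choice of $\widetilde{L}$ is precisely quotiented out in $\Pic^{log}$ requires a careful local analysis of $\mathbb{G}_{m,log}$ in the klf topology, working against an étale-local chart $\mathbb{N}^r$ on $\mathcal{C}$.

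The torsor extension follows formally: compose the hypothetical $\mathcal{G}^D\to \mathcal{J}$ with the constructed morphism and invoke Theorem \ref{previ}. For the fppf criterion, I would use Raynaud's identification $\mathcal{J}^0 = \Pic^0_{\mathcal{C}/R}$, which sits inside $\Pic^{log}_{\mathcal{C}/R}$ through the strict embedding $\Pic_{\mathcal{C}/R}\hookrightarrow \Pic^{log}_{\mathcal{C}/R}$. A Kummer log flat $\mathcal{G}$-torsor is fppf if and only if it is strict, equivalently if and only if its classifying morphism $\mathcal{G}^D\to \Pic^{log}_{\mathcal{C}/R}$ factors through $\Pic_{\mathcal{C}/R}$. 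Since the non-identity components of $\mathcal{J}$ embed into $\Pic^{log}_{\mathcal{C}/R}$ via genuine log twists by boundary divisors of the special fibre of $\mathcal{C}$, the intersection $\mathcal{J}\cap \Pic_{\mathcal{C}/R}$ taken inside $\Pic^{log}_{\mathcal{C}/R}$ equals $\mathcal{J}^0$, which gives the stated equivalence.
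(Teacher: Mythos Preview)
The paper does not prove this proposition. It appears in \S\ref{previous work} (``Extension of torsors under a finite flat group scheme''), which is explicitly a summary of results from the author's earlier article \cite{Sara}; the proposition is stated with the citation \cite[Proposition~1.12 and Proposition~1.18]{Sara} and no argument is given here. So there is nothing in the present paper to compare your attempt against.

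That said, your sketch is broadly in the right direction and is likely close in spirit to what is done in \cite{Sara}. A few remarks. Your existence argument is sound once you note that it suffices to produce a single element of $\Pic^{log}_{\mathcal{C}/R}(\mathcal{J})$ (Yoneda, since $\mathcal{J}$ is representable); your claim that $\mathcal{C}_T$ is regular for smooth $T$ is correct because $\mathcal{C}_T\to\mathcal{C}$ is smooth and $\mathcal{C}$ is regular. For uniqueness, invoking ``separatedness'' of the log Picard sheaf is the right idea but needs to be made precise: what you really use is that for an $R$-flat test scheme the restriction $\Pic^{log}_{\mathcal{C}/R}(T)\to \Pic_{C/K}(T_K)$ is injective, which comes from the description of $\mathbb{G}_{m,log}$ on a log regular base.

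The weakest part is the fppf criterion. Your argument rests on the assertion that $\mathcal{J}\cap \Pic_{\mathcal{C}/R}=\mathcal{J}^0$ inside $\Pic^{log}_{\mathcal{C}/R}$, i.e.\ that non-identity components of $\mathcal{J}$ map to genuinely logarithmic classes. But you have not shown that the morphism $\mathcal{J}\to \Pic^{log}_{\mathcal{C}/R}$ is a monomorphism, nor that the image of a non-identity component cannot lie in the classical $\Pic_{\mathcal{C}/R}$. A cleaner route, and presumably the one taken in \cite{Sara}, is to identify directly (via Raynaud, using the section $\mathcal{Q}$) $\mathcal{J}^0\simeq \Pic^0_{\mathcal{C}/R}$ and to show that pointed fppf $\mathcal{G}$-torsors on $\mathcal{C}$ correspond exactly to morphisms $\mathcal{G}^D\to \Pic^0_{\mathcal{C}/R}$, bypassing any intersection argument inside $\Pic^{log}$.
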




\section{Part I: Case of semistable curves}
\subsection{The Log Picard functor}\label{Picard}

Recently, the Picard log functor has been defined in a more general frame. Let $S$ be a log regular scheme and let $U \subseteq S$ be the open of triviality of the log structure on $S$ (which is non empty and even dense in $S$ by log regularity). Let ${X} \to S$ be a logarithmic curve (hence smooth over $U$). In \cite{MW}, following the ideas of Illusie and Kato, the authors constructed the analogue of the Picard functor in the logarithmic setting: the logarithmic Picard group that they denoted by $\mathrm{LogPic}_{{X}/S}$. It is the sheaf of isomorphism classes of the stack which parameterizes the logarithmic line bundles, i.e torsors under the group scheme $\mathbb{G}_{m,log,S}$ which verify a certain condition called \textit{the condition of bounded monodromy}.\\
Naturally, the logarithmic Picard group coincides with the ordinary Picard group over ${X}_U$, where the log structure is trivial. Furthermore, logarithmic line bundles have a natural notion of (total) degree extending the notion of degree of classical line bundles (cf. \cite[\S 4.5]{MW}).

Using this notion of degree, it is defined in \cite[Definition 3.47]{Holmes} $\mathrm{LogPic}^0_{{X}/S}$, the subsheaf of $\mathrm{LogPic_{{X}/S}}$ consisting of log line bundles of total degree zero, which they called \textit{the logarithmic Jacobian}.  In fact, this provides the best possible extension of the Jacobian $\Pic^0_{{X}_U/U}$. \\

Futhermore, one can restrict the functor $\mathrm{LogPic}^0_{{X}/S}$ to the category of schemes via the embedding $(Sch)_{fppf} \hookrightarrow (fs/S)_{klf}$, and the resulting functor is called the \textit{strict logarithmic Jacobian} and denoted by $\mathrm{sLogPic^0}_{{X}/S}$ (cf. \cite[Definition 4.5]{Holmes}).

\begin{thm}\cite[Corollary 6.13]{Holmes}\label{ner}
    $\mathrm{sLogPic}^0_{{X}/S}$ is the Néron model of $~\Pic^0_{{X}_U/U}$.
\end{thm}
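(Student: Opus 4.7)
The plan is to verify the three characterizing properties of the Néron model of $\Pic^0_{X_U/U}$, whose uniqueness then forces the identification. Concretely, I would check that $\mathrm{sLogPic}^0_{X/S}$ is (i) represented by a smooth separated algebraic space over $S$, (ii) restricts to $\Pic^0_{X_U/U}$ over $U$, and (iii) satisfies the Néron mapping property: every $T_U$-point, for $T$ smooth over $S$, extends uniquely to a $T$-point.

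Property (ii) is essentially built in: over $U$ the log structure of $X$ is trivial, bounded monodromy is vacuous, and $\mathrm{LogPic}^0_{X/S}|_U$ reduces to $\Pic^0_{X_U/U}$; the passage to the strict locus along $(\mathrm{Sch})_{fppf} \hookrightarrow (fs/S)_{klf}$ is harmless there. For (i), smoothness follows from the deformation-theoretic smoothness of $\mathrm{LogPic}^0_{X/S}$ proved in \cite{MW}: on a log curve, the obstructions to deforming a log line bundle sit in an $H^2$ that vanishes, and the bounded-monodromy locus is open in the ambient log Picard. Restricting along the strict embedding preserves smoothness in the scheme-theoretic sense. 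Separatedness comes from a valuative criterion: bounded monodromy excises the non-separated ``jumps'' in $\Pic^0_{X/S}$ coming from components of the special fibre.

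The heart of the argument is the Néron mapping property. Given $L \in \Pic^0_{X_U/U}(T_U)$ with $T/S$ smooth, first extend $L$ to a coherent sheaf $\widetilde L$ on $X_T$, using properness of $X/S$ and the fact that $X_T$ is regular away from nodes. Since $\mathbb{G}_{m,\log}$ has strictly more sections than $\mathbb{G}_m$ precisely along the log locus, one can upgrade $\widetilde L$ to a log line bundle by prescribing slopes at each node of the special fibre; twisting by vertical divisors supported on those fibres then corrects the total degree to zero, producing a point of $\mathrm{sLogPic}^0_{X/S}(T)$ over the given $L$. Uniqueness reduces to showing that a bounded-monodromy log line bundle on $X_T$ of total degree zero that is trivial on $X_{T_U}$ is itself trivial — a statement about the dual graph of the special fibre, where such an extension is classified by a tropical divisor class, and the degree and monodromy constraints together force it to vanish in the tropical Jacobian.

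The main obstacle is precisely this last step: proving that bounded monodromy is the \emph{right} condition to cut out a Néron model rather than some strictly larger object. I would attack it either directly, via an exact sequence of lattices attached to the dual graph of the special fibre of $X/S$ (monodromy bounds picking out the image of the cocharacter lattice inside the character lattice), or by comparison with Raynaud's classical presentation of the Néron model of a Jacobian as $\Pic^0$ of a regular model modulo a lattice of components, checking that the quotient described by bounded-monodromy log line bundles agrees with Raynaud's quotient on $T$-points.
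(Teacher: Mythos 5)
The paper does not prove this statement at all: Theorem \ref{ner} is quoted verbatim from \cite[Corollary 6.13]{Holmes} and used as a black box, so there is no internal argument to compare yours against. Judged on its own terms, your proposal correctly identifies the three things one must verify (representability by a smooth separated algebraic space, agreement with $\Pic^0_{X_U/U}$ over the open of triviality, and the N\'eron mapping property), and the restriction step (ii) is indeed essentially formal since bounded monodromy and the log structure are both vacuous over $U$.

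The problem is that the entire mathematical content of the theorem lives in the step you yourself flag as ``the main obstacle'': showing that the bounded-monodromy, degree-zero condition carves out exactly the N\'eron model --- i.e., that every $T_U$-point extends (existence) and that a bounded-monodromy log line bundle of total degree zero, trivial over $T_U$, is trivial (uniqueness/separatedness). You propose two lines of attack (a lattice-theoretic analysis of the dual graph, or a comparison with Raynaud's presentation of the N\'eron model as $\Pic^0$ of a regular model modulo the lattice of vertical divisors) but carry out neither, so the proposal is a roadmap rather than a proof. Two further points deserve care if you pursue this: first, your extension step for the mapping property (``extend $L$ to a coherent sheaf $\widetilde L$ on $X_T$, then prescribe slopes at the nodes'') needs justification when $T$ is a general smooth $S$-scheme rather than a trait, since the classical extension-of-line-bundles arguments are valuative in nature; second, smoothness of the \emph{strict} restriction is not automatic from log smoothness of $\mathrm{LogPic}$ --- one must check that the strict locus is itself smooth over $S$, which is one of the genuinely nontrivial points in \cite{Holmes}.
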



\begin{rema}
The condition of bounded monodromy is essential to get a log Picard group of $X/S$ that is well-behaved in families. For the purposes of this paper, we don't need to recall its definition in the general setting; we will simply recall it in the case where the base $S$ is the spectrum of a discrete valuation ring endowed with the divisorial log structure. We will see that in this case, this condition is automatically satisfied.

So we take $S$ to be $\Spec(R)$ endowed with the divisorial log structure. Let $X$ denote an $R$-log curve and let $s=\Spec(\Bar{k})$ denote the geometric closed point of $\Spec(R)$. If $\Gamma$ denotes the dual graph (which is assumed to be oriented) of $X_{s}$ and if $M_R$ denotes the divisorial log structure over the base $\Spec(R)$, one can define on $\Gamma$ a \textit{length map} $l: \Gamma \to \overline{M_R}_{,s}$, where $\overline{M_R}:=M_R/\mathcal{O}_R^*$. Since $M_R=\mathcal{O}_R \cap \mathcal{O}_K^*$, one finds that $\overline{M_R}_{,s}\simeq \mathbb{N}$.\\
  The data $\mathcal{X}=(\Gamma, l: \Gamma \to \N)$ is called the tropical curve associated to $X_{s}$ (cf. \cite[\S 2.3]{MW}). In addition, on can define a topology on tropical curves (cf. \cite[\S 3]{MW}) which allows to do homology on them. In particular, the length map $l$ can be extended to $H_1(\mathcal{X})$.\\
  To any log line bundle over $X$ is associated a class of morphisms $H_1(\mathcal{X}) \to \overline{M_R}_{,s}^{gp}=\N^{gp}=\Z$, called the \textit{monodromy class} (cf. \cite[s 3.5 and \S 4.1]{MW}). A logarithmic line bundle is said to have bounded monodromy if for any $\gamma \in H_1(\mathcal{X}), \exists n \in \N$ such that $-nl(\gamma) \leq \alpha(\gamma) \leq nl(\gamma)$, where $\alpha: H_1(\mathcal{X}) \to \Z$ is some representative in the monodromy class of the line bundle (this condition does not depend on the choice of a representative). $\overline{M_R}_{,s}\simeq\mathbb{N}$ being archimedean, it is clear that the monodromy condition is automatically satisfied in this setting. \\
  
  Therefore, in the case where $S$ is the spectrum of a discrete valuation ring endowed with its divisorial log structure, the condition of bounded monodromy is automatically satisfied, which means that log line bundles consist of all the $\mathbb{G}_{m,log,S}$-torsors. In particular, $\mathrm{sLogPic_{X/S}}$ coincides with the log Picard functor we recalled in subsection \ref{previous work}.
 
 \end{rema}


\begin{prop}\label{finitor}
Let $C$ be a smooth projective semistable and geometrically connected curve endowed with a $K$-point. Let $\mathcal{C}$ be an $R$-regular model of $C$ with normal crossing special fiber and endowed with the divisorial log structure. Let $\mathcal{G}$ be a finite flat $R$-group scheme. Then any $R$-morphism $\mathcal{G} \to \mathrm{Pic}^{log}_{\mathcal{C}/R}$ factors through $\mathrm{sLogPic}^0_{\mathcal{C}/R}$.
 
\end{prop}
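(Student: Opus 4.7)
The proposal is to exploit the total degree map on the logarithmic Picard functor and the torsion-free nature of $\mathbb{Z}$ to show that any morphism from a finite flat group scheme must land in the degree-zero part.

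First, I would recall from \cite[\S 4.5]{MW} and \cite[Definition 3.47]{Holmes} that log line bundles carry a well-defined total degree, giving rise to a morphism of $R$-sheaves $\deg: \mathrm{LogPic}_{\mathcal{C}/R} \to \underline{\Z}$ (the constant sheaf; this is $\Z$-valued because $C$ is geometrically connected, so the fibres of $\mathcal{C}/R$ are connected), whose kernel is by definition $\mathrm{LogPic}^0_{\mathcal{C}/R}$. By the preceding remark, in our DVR setting the bounded monodromy condition is automatic, so $\mathrm{Pic}^{log}_{\mathcal{C}/R}$ agrees with $\mathrm{sLogPic}_{\mathcal{C}/R}$ on $(\mathrm{Sch}/R)_{fppf}$. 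Restricting the degree map along the embedding $(\mathrm{Sch}/R)_{fppf} \hookrightarrow (fs/R)_{klf}$ produces a morphism of fppf sheaves
\[
\deg: \mathrm{Pic}^{log}_{\mathcal{C}/R} \longrightarrow \underline{\Z}_R
\]
whose kernel is exactly $\mathrm{sLogPic}^0_{\mathcal{C}/R}$.

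Given an $R$-morphism $\varphi: \mathcal{G} \to \mathrm{Pic}^{log}_{\mathcal{C}/R}$, the proof then reduces to showing $\deg \circ \varphi = 0$. This composition is a homomorphism of commutative $R$-group sheaves $\mathcal{G} \to \underline{\Z}_R$. Since $\underline{\Z}_R$ is étale (indeed a disjoint union of copies of $\Spec R$) and $\mathcal{G}$ is finite flat over $R$ (hence quasi-compact with only finitely many connected components), the morphism factors through the component group $\pi_0(\mathcal{G})$, which is a finite étale $R$-group scheme. On each geometric fibre this becomes a group homomorphism from a finite group to $\Z$, and such a map is forced to be zero because $\Z$ is torsion-free. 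Thus $\deg \circ \varphi$ vanishes, and $\varphi$ factors through $\mathrm{sLogPic}^0_{\mathcal{C}/R}$.

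The main obstacle I anticipate is purely bookkeeping: one has to be confident that the total-degree morphism constructed in \cite{MW, Holmes} on the big stack $\mathrm{LogPic}$ actually restricts to the fppf sheaf $\mathrm{Pic}^{log}_{\mathcal{C}/R}$ of Definition \ref{Piclog}(2) with the expected kernel. Once the identification $\mathrm{Pic}^{log}_{\mathcal{C}/R} \simeq \mathrm{sLogPic}_{\mathcal{C}/R}$ (from the preceding remark) is granted, the rest is the classical observation that any homomorphism from a finite flat group scheme to a constant torsion-free étale group scheme is trivial. No further input specific to semistable curves is needed in this step, beyond what is already encoded in the definition of $\mathrm{sLogPic}^0$ and the existence of the degree map.
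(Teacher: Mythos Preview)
Your proposal is correct and follows essentially the same route as the paper: both use the total degree map $\mathrm{Pic}^{log}_{\mathcal{C}/R} \to \Z$, the torsion-freeness of $\Z$, and the finiteness of $\mathcal{G}$ to force the image into degree zero, hence into $\mathrm{sLogPic}^0_{\mathcal{C}/R}$. The only cosmetic difference is that the paper invokes directly that $\mathcal{G}$ is torsion (being killed by its order) rather than passing through $\pi_0(\mathcal{G})$ and geometric fibres as you do.
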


\begin{proof}
 According to the previous remark, $\mathrm{sLogPic}^0_{\mathcal{C}/R}$ is the subsheaf of $\Pic^{log}_{\mathcal{C}/R}$ of (total) degree zero log line bundles. On the other hand, since $\mathcal{G}$ is of torsion, the morphism $\mathcal{G} \to \mathrm{Pic}^{log}_{\mathcal{C}/R}$ factors through the torsion of $\Pic^{log}_{\mathcal{C}/R}$. Now, given the (total) degree map $\mathrm{Pic}^{log}_{\mathcal{C}/R} \xrightarrow{\mathrm{deg}} \mathbb{Z}$ and the fact that $\Z$ has no torsion, we deduce that torsion log line bundles have (total) degree zero. In addition, $\mathcal{G} \to \Spec(R)$ being strict, we conclude that $\mathcal{G} \to \mathrm{Pic}^{log}_{\mathcal{C}/R}$ factors through $\mathrm{sLogPic}^0_{\mathcal{C}/R}$.
\end{proof}

\begin{cor}\label{neronlogg}

Let $C$ be a smooth projective semistable and geometrically connected curve endowed with a $K$-point. Let $\mathcal{C}$ be an $R$-regular model of $C$ with normal crossing special fiber and endowed with the divisorial log structure (cf. example \ref{log sch}(3)). Let $G$ be a finite commutative $K$-group scheme and $\mathcal{G}$ a finite flat $R$-model of $G$. Then a pointed fppf $G$-torsor extends into a pointed $\mathcal{G}$-log torsor over $\mathcal{C}$ if and only if the $K$-morphism $G^D \to J$ associated to the generic torsor (cf. (\ref{Jac})) extends into an  $R$-morphism $\mathcal{G}^D \to \mathcal{J}$. 
\end{cor}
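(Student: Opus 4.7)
My plan is to assemble Corollary~\ref{neronlogg} directly from the three immediately preceding results: the Hom--torsor dictionary of Theorem~\ref{previ}, the factorization Proposition~\ref{finitor}, and the Néron identification of Theorem~\ref{ner}. The ``if'' direction is precisely the first assertion of Proposition~\ref{PropN}, so I will simply cite it; the whole content lies in the converse.

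Starting from a pointed $\mathcal{G}$-log torsor on $\mathcal{C}$ extending the given fppf $G$-torsor on $C$, I would apply Theorem~\ref{previ} to translate it into an $R$-morphism
\[
f\colon \mathcal{G}^D \longrightarrow \Pic^{log}_{\mathcal{C}/R},
\]
whose generic fiber is, by the compatibility built into that theorem, the $K$-morphism $G^D\to J$ of $(\star)$ attached to the original torsor. Since $\mathcal{G}$ is finite flat and commutative, its Cartier dual $\mathcal{G}^D$ is again a finite flat commutative $R$-group scheme, so Proposition~\ref{finitor} applies and forces $f$ to factor through $\mathrm{sLogPic}^0_{\mathcal{C}/R}$. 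Theorem~\ref{ner} then identifies $\mathrm{sLogPic}^0_{\mathcal{C}/R}$ with the Néron model $\mathcal{J}$ of $J=\Pic^0_{C/K}$, producing the desired $R$-morphism $\mathcal{G}^D\to \mathcal{J}$.

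The only step that is not entirely formal, and therefore the closest thing to an obstacle, is checking that the generic fiber of the resulting $\mathcal{G}^D\to \mathcal{J}$ is genuinely the original $G^D\to J$ from $(\star)$, rather than some unrelated $K$-morphism with the same source and target. This is ensured by the fact that the log structure on $\mathcal{C}$ is trivial on the generic fiber: one has $\Pic^{log}_{\mathcal{C}/R}\times_R K \simeq \Pic_{C/K}$ and $\mathrm{sLogPic}^0_{\mathcal{C}/R}\times_R K \simeq J$, and both the factorization supplied by Proposition~\ref{finitor} and the isomorphism of Theorem~\ref{ner} are compatible with these identifications (the latter because the Néron universal property pins $\mathcal{J}$ down by its generic fiber $J$). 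Combining this compatibility with the generic fiber statement of Theorem~\ref{previ} shows that the constructed extension $\mathcal{G}^D\to\mathcal{J}$ really extends $G^D\to J$, which closes the equivalence.
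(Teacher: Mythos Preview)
Your proof is correct and follows exactly the paper's approach: the paper's own proof is the single line ``This follows from Theorem~\ref{previ}, Theorem~\ref{ner} and Proposition~\ref{finitor},'' and you have simply unpacked that citation in detail, with the harmless addition of invoking Proposition~\ref{PropN} for the easy direction. The extra care you take in checking that the generic fiber of the constructed morphism $\mathcal{G}^D\to\mathcal{J}$ is the original $G^D\to J$ is a welcome elaboration but not a departure in strategy.
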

\begin{proof}
This follows from Theorem \ref{previ}, Theorem \ref{ner} and Proposition \ref{finitor}.
\end{proof}

\subsection{On the existence of a finite flat model of the group scheme}\label{finite flat tor}
We assume in this section that $k$ is algebraically closed. Let $C$ be a semistable smooth projective and geometrically connected $K$-curve with Jacobian variety $J$. Assume that $G$ is a finite commutative subgroup scheme of $J$. In particular, the  morphism $G \hookrightarrow J$ factors through $J[r]$, where $r$ is the order of $G$. If $\mathcal{J}$ is the Néron model of $J$, we let $\overline{G}$ be the schematic closure of $G$ inside $\mathcal{J}[r]$. Since $C$ is semistable, $\mathcal{J}[r]$ is flat and quasi-finite (cf. \cite[\S 7.3, Lemma~2]{BLR}). \\

In a previous paper (cf. \cite[\S 3]{Sara}), we found a necessary and sufficient condition for $\mathcal{J}[r]$ to be finite and flat\footnote{the conditions are: $C$ is semistable, together with a combinatorial condition on the dual graph of the special fiber.}, hence for $G$ to admit a finite flat $R$-model, namely $\overline{G}$.\\ 

\textbf{Question:} If $\mathcal{J}[r]$ is not assumed to be finite flat anymore, can we still find a necessary and sufficient condition for the schematic closure $\overline{G}$ of $G$ to be finite and flat?

For the rest of the section, we assume that $R$ is Henselian. Since $\mathcal{J}[r]$ is quasi-finite and $R$ Henselian, according to \cite[Lemma 1.1]{Mazur}, we have an exact sequence:
\begin{equation}\label{J[r]}
    0 \to \mathcal{FJ}[r] \to \mathcal{J}[r] \to \mathcal{EJ}[r] \to 0
\end{equation}

where $\mathcal{FJ}[r]$ is a finite flat group scheme over $R$ and $\mathcal{EJ}[r]$ is an étale group scheme over $R$, with trivial special fiber. In particular, it follows from \cite[\S IX, Lemma 2.2.3]{SGA7} that $\mathcal{FJ}[r]$ is the largest finite subgroup scheme in $\mathcal{J}[r]$.\\

 The schematic closure $\overline{G}$ of $G$ in $\mathcal{J}[r]$ is flat and quasi-finite. Hence we have as previously an exact sequence: 
 \begin{equation*}
 0 \to \mathcal{F} \to \overline{G} \to \mathcal{E} \to 0 
 \end{equation*}
where $\mathcal{F}$ is a finite flat group scheme over $R$ and $\mathcal{E}$ is an étale group scheme over $R$, with trivial special fiber. We would like to find a necessary and sufficient condition for $\overline{G}$ to be finite. \\

We denote by $\mathcal{FJ}[r]_K$ the generic fiber of $\mathcal{FJ}[r]$.

\begin{lem}\label{fini}
    $\overline{G}$ is finite if and only if  $G \to J[r]$ factors through $\mathcal{FJ}[r]_K$.
\end{lem}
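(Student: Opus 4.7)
My plan is to prove both directions as direct consequences of the fact, recalled just before the lemma, that $\mathcal{FJ}[r]$ is the \emph{largest} finite subgroup scheme of the quasi-finite flat group scheme $\mathcal{J}[r]$. The main engine is the compatibility of schematic closure with the closed immersion $\mathcal{FJ}[r]\hookrightarrow \mathcal{J}[r]$, so there is no serious obstacle; the argument is essentially formal once this compatibility is set up.

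For the direct implication, I would assume $\overline{G}$ is finite. Then in the exact sequence
\[
0 \to \mathcal{F} \to \overline{G} \to \mathcal{E} \to 0,
\]
the quotient $\mathcal{E}$ is a quotient of a finite flat group scheme by a finite flat subgroup scheme, hence is itself finite; being étale over the Henselian local $\Spec(R)$ with trivial special fiber, $\mathcal{E}$ must vanish. So $\overline{G} = \mathcal{F}$ is finite flat, and therefore sits inside $\mathcal{FJ}[r]$ by the maximality property cited from \cite[\S IX, Lemma 2.2.3]{SGA7}. Passing to generic fibres gives $G \subseteq \mathcal{FJ}[r]_K \subseteq J[r]$, which is exactly the factorisation claimed.

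For the converse, suppose $G \to J[r]$ factors through $\mathcal{FJ}[r]_K$. Since $\mathcal{FJ}[r] \hookrightarrow \mathcal{J}[r]$ is a closed immersion, the schematic closure of $G$ inside $\mathcal{J}[r]$ coincides with its schematic closure inside $\mathcal{FJ}[r]$. Hence $\overline{G}$ is a closed subscheme of the finite $R$-scheme $\mathcal{FJ}[r]$, and is in particular finite over $R$. This closes the argument. The one subtle point I would want to check carefully while writing is the identification of the two schematic closures (in $\mathcal{J}[r]$ and in $\mathcal{FJ}[r]$), which is a standard property of schematic closures along closed immersions; and the fact, used in the direct implication, that an étale quasi-finite group scheme over a Henselian local base with trivial special fiber is trivial, which is the content of the decomposition from \cite[Lemma 1.1]{Mazur} already invoked in the text.
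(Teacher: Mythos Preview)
Your proof is correct and follows essentially the same route as the paper: maximality of $\mathcal{FJ}[r]$ for the forward direction, and compatibility of schematic closure with the closed immersion $\mathcal{FJ}[r]\hookrightarrow\mathcal{J}[r]$ for the converse. The only difference is that your detour through the exact sequence to show $\mathcal{E}=0$ is redundant---once $\overline{G}$ is assumed finite (and it is already flat as a schematic closure), the maximality of $\mathcal{FJ}[r]$ applies directly, which is exactly what the paper does.
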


\begin{proof}
    If $\overline{G}$ is finite, since $\mathcal{FJ}[r]$ is the largest finite subgroup scheme inside $\mathcal{J}[r]$, then $\overline{G} \to \mathcal{J}[r]$ factors through $\mathcal{FJ}[r]$, hence $G \to J[r]$ factors through $\mathcal{FJ}[r]_K$.
    
On the other hand, if ${G} \to {J}[r]$ factors through $\mathcal{FJ}[r]_K$, since $\mathcal{FJ}[r]$ is closed inside $\mathcal{J}[r]$ (it is a kernel), $\overline{G}$ is the schematic closure of $G$ in $\mathcal{FJ}[r]$, hence it is finite (closed immersions are finite and the composition of two finite morphisms is finite).
\end{proof}

\begin{cor}\label{Finitesemi}
    Let $C$ be a semistable smooth projective and geometrically connected curve with a $K$-point, $\mathcal{C}$ an $R$-regular model of $C$ with normal crossing special fiber and endowed with the divisorial log structure, $J$ the Jacobian of $C$ and $\mathcal{J}$ its Néron model. Let $G$ be a finite subgroup scheme of $J$. Then, the corresponding fppf pointed $G^D$-torsor $Y \to C$ (cf. (\ref{Jac})) extends into a log torsor over $\mathcal{C}$ under a finite flat group scheme if and only if $G$ is a subgroup of $\mathcal{FJ}[r]_K$, with $r$ the order of $G$. 
\end{cor}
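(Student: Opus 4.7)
The strategy is to apply Corollary~\ref{neronlogg} in its Cartier-dual form, combine it with Lemma~\ref{fini}, and keep track of scheme-theoretic images inside the quasi-finite group scheme $\mathcal{J}[r]$ (throughout using that $R$ is Henselian, so that $\mathcal{FJ}[r]$ makes sense and Lemma~\ref{fini} applies). First, since Cartier duality is an anti-equivalence on the category of finite flat commutative $R$-group schemes, the data of a finite flat $R$-model $\mathcal{H}$ of $G^D$ together with an $R$-morphism $\mathcal{H}^D \to \mathcal{J}$ is the same as that of a finite flat $R$-model $\mathcal{G} := \mathcal{H}^D$ of $G$ together with an $R$-morphism $\mathcal{G} \to \mathcal{J}$. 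Applied to the $G^D$-torsor $Y \to C$, whose associated $K$-morphism via (\ref{Jac}) is $(G^D)^D = G \hookrightarrow J$, Corollary~\ref{neronlogg} therefore says that $Y \to C$ extends into a log torsor under \emph{some} finite flat $R$-group scheme if and only if there exist a finite flat $R$-model $\mathcal{G}$ of $G$ and an $R$-morphism $\mathcal{G} \to \mathcal{J}$ extending $G \hookrightarrow J$.

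For the direct implication, suppose such a $\mathcal{G}$ exists. Multiplication by $r$ annihilates $G$, hence by $R$-flatness also annihilates $\mathcal{G}$, so $\mathcal{G} \to \mathcal{J}$ factors through $\mathcal{J}[r]$. The scheme-theoretic image of $\mathcal{G}$ in $\mathcal{J}[r]$ coincides with the schematic closure $\overline{G}$ of $G$ in $\mathcal{J}[r]$: it is $R$-flat, proper over $R$ (as a quotient of the finite $R$-scheme $\mathcal{G}$), and quasi-finite (as a closed subscheme of the quasi-finite $\mathcal{J}[r]$), hence finite over $R$. Lemma~\ref{fini} then forces $G$ to be contained in $\mathcal{FJ}[r]_K$.

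For the converse, assume $G \subset \mathcal{FJ}[r]_K$. Lemma~\ref{fini} yields that $\overline{G}$ is finite, and it is $R$-flat as a schematic closure over a DVR, hence a finite flat $R$-model of $G$. The closed immersion $\overline{G} \hookrightarrow \mathcal{J}[r] \subset \mathcal{J}$ gives the required extension of $G \hookrightarrow J$, and the first paragraph yields a log torsor extension under the finite flat group scheme $\overline{G}^D$. The only subtleties are the Cartier-dual reformulation of Corollary~\ref{neronlogg} and the identification of the scheme-theoretic image of $\mathcal{G}$ in $\mathcal{J}[r]$ with $\overline{G}$; both are routine once the setup is in place.
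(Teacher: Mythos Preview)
Your proof is correct and follows essentially the same route as the paper: both directions reduce to Corollary~\ref{neronlogg} (in its Cartier-dual form) combined with Lemma~\ref{fini}, using the schematic closure $\overline{G}$ in $\mathcal{J}[r]$ as the pivot. Your forward implication is in fact slightly more careful than the paper's: where the paper asserts that $\mathcal{G}$ \emph{equals} the schematic closure $\overline{G}$ (which would require the map $\mathcal{G}\to\mathcal{J}[r]$ to be a closed immersion), you correctly pass to the scheme-theoretic image and argue that it is proper and quasi-finite, hence finite, before invoking Lemma~\ref{fini}.
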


\begin{proof}
If $G \hookrightarrow J[r]$ factors through $\mathcal{FJ}[r]_K$, then the schematic closure $\overline{G}$ of $G$ in $\mathcal{J}[r]$ is finite and flat by Lemma \ref{fini}, and it follows from Corollary \ref{neronlogg} that the torsor extends into a logarithmic $\overline{G}^D$-torsor over $\mathcal{C}$. On the other hand, if there exists a finite flat model $\mathcal{G}$ of $G$ such that the torsor extends into a log $\mathcal{G}^D$-torsor, then it follows from Corollary \ref{neronlogg} again that the $K$-morphism $G \to J[r]$ extends into an $R$-morphism $\mathcal{G} \to \mathcal{J}[r]$. Since $\mathcal{J}$ is separated, it follows that $\mathcal{G}$ is necessarily the schematic closure of $G$ in $\mathcal{J}[r]$, hence, by Lemma \ref{fini}, it implies that $G \hookrightarrow J[r]$ factors through $\mathcal{FJ}[r]_K$.

\end{proof}

\begin{Counter-example}\label{CE}
Let $A$ be a local noetherian and complete ring, with fraction field $K$, and residue field of characteristics $p$. We find in \cite[\S 5]{Oort} the following bijection 

$$\{\mathrm{isomorphism~classes~of~}A\mathrm{-group~schemes~of~order~}p \} \simeq \{(a,b) \in A^2|ab=p\}/\sim $$

where  $(a,b)\sim (c,d)$ if and only if $\exists u \in A^{\times}$ such that $c=u^{p-1}a$ and $d=u^{1-p}b$. Considering the restriction morphism 
$$\{(a,b) \in A^2|ab=p\}/\sim \xrightarrow{\varphi} \{(a,b) \in K^2|ab=p\}/\sim, $$
and taking for example $A=\mathbb{Z}_p$, it is easy to see that $\varphi$ is not surjective in general, which means that there are $\mathbb{Q}_p$-group schemes that doesn't extend into a finite flat $\mathbb{Z}_p$-group schemes.
\end{Counter-example}

\textbf{Question: } More generally, it is natural to ask what happens if don't we assume that the structural group of the torsor admits a finite flat $R$-model. We investigate this question in the next section.

\section{Part II: Extension of torsors under a quasi-finite flat group scheme}\label{quasi-finite}

The following result by Antei says that there exists some regular model of the curve where the torsor extends into an fppf torsor:
\begin{thm}\cite[Theorem 3.7]{Antei}\label{Ant}
Let $\mathcal{X} \to \Spec(R)$ be a faithfully flat morphism of finite type, with $\mathcal{X}$ a regular and integral scheme of absolute dimension $2$ endowed with an $R$-section. Let $G$ be a finite $K$-group scheme and $f:Y\to \mathcal{X}_K$ an fppf pointed $G$-torsor. Then there exists an integral scheme $\mathcal{X}_0$, faithfully flat and of finite type over $R$, a model map $\lambda: \mathcal{X}_0 \to \mathcal{X}$ and an fppf $\mathcal{G}$-torsor $\mathcal{Y} \to \mathcal{X}_0$ extending the given $G$-torsor $Y$ for some quasi-finite and flat $R$-group scheme $\mathcal{G}$.  Moreover, $\mathcal{X}_0$ can be obtained by $\mathcal{X}$ after a finite number of Néron blow-ups.
\end{thm}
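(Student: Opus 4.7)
The plan is to construct the extension $\mathcal{Y} \to \mathcal{X}_0$ by iterated Néron blow-ups (dilatations), starting from a normalization cover of $\mathcal{X}$ and simultaneously modifying both the base and a quasi-finite flat $R$-group scheme acting on it. Three ingredients are central: the existence of a quasi-finite flat $R$-model of any finite $K$-group scheme, the extension of $Y \to \mathcal{X}_K$ to a finite cover of $\mathcal{X}$ via integral closure, and the dilatation formalism for regular $R$-schemes of absolute dimension two.

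I would begin by fixing an initial quasi-finite flat $R$-group scheme $\mathcal{G}_0$ with generic fibre $G$. Let $\mathcal{Y}_0 \to \mathcal{X}$ denote the normalization of $\mathcal{X}$ in the function algebra of $Y$; this is finite and agrees with $Y \to \mathcal{X}_K$ generically. Because $\mathcal{Y}_0 \to \mathcal{X}$ is separated and $\mathcal{X}_K$ is scheme-theoretically dense in $\mathcal{X}$ (as $\mathcal{X}$ is $R$-flat), the generic $G$-action extends uniquely to a right action of $\mathcal{G}_0$ on $\mathcal{Y}_0$, and the $K$-rational base point extends by properness to an $R$-section of $\mathcal{Y}_0 \to \mathcal{X}$. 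Let $Z_0 \subset \mathcal{X}$ be the closed locus where either $\mathcal{Y}_0 \to \mathcal{X}$ fails to be flat or the torsor map $\mathcal{G}_0 \times_R \mathcal{Y}_0 \to \mathcal{Y}_0 \times_{\mathcal{X}} \mathcal{Y}_0$ fails to be an isomorphism. Generic flatness together with the torsor hypothesis on $\mathcal{X}_K$ forces $Z_0$ to lie in the special fibre $\mathcal{X}_k$; since $\mathcal{X}$ has absolute dimension two, one has $\dim Z_0 \le 1$.

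Next, I would perform a Néron blow-up $\lambda_1: \mathcal{X}_1 \to \mathcal{X}$ centered along an irreducible component of $Z_0$. Such a dilatation preserves regularity, integrality, faithful flatness over $R$, and absolute dimension two; choosing centres disjoint from the image of the $R$-section (which meets $\mathcal{X}_k$ in a single point), the section lifts canonically. Passing to the strict transform of $\mathcal{Y}_0$ and applying a compatible Néron blow-up on the group side produces new data $(\mathcal{X}_1, \mathcal{Y}_1, \mathcal{G}_1)$ with an action and a section, and one iterates, defining loci $Z_1, Z_2, \dots$ analogously.

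The main obstacle, and the real content of the theorem, is to show that this procedure terminates. The approach I would pursue is to attach to each triple $(\mathcal{X}_i, \mathcal{Y}_i, \mathcal{G}_i)$ a discrete numerical invariant—for instance the total length of the coherent torsion sheaf measuring the failure of the torsor axiom at the generic points of $Z_i$, or a suitable $\delta$-invariant of the fibres of $\mathcal{Y}_i \to \mathcal{X}_i$ along $Z_i$—and to show by an explicit local calculation on an affine chart of a single dilatation (using the universal property of the Néron blow-up to replace the non-flat locus by its schematic image) that this invariant strictly drops at each step. Once termination is established, letting $n$ be the smallest index with $Z_n = \emptyset$ yields $\mathcal{X}_0 := \mathcal{X}_n$, $\mathcal{Y} := \mathcal{Y}_n$, and $\mathcal{G} := \mathcal{G}_n$ as the required data, with $\lambda := \lambda_n \circ \dots \circ \lambda_1$ a composition of finitely many Néron blow-ups.
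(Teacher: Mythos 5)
First, note that the paper you are working from does not prove this statement at all: it is quoted verbatim from Antei--Emsalem \cite[Theorem 3.7]{Antei}, so there is no internal proof to compare against. Judged on its own terms, your proposal has genuine gaps at its two load-bearing points. The first is the claim that ``because $\mathcal{Y}_0 \to \mathcal{X}$ is separated and $\mathcal{X}_K$ is scheme-theoretically dense, the generic $G$-action extends uniquely to a right action of $\mathcal{G}_0$ on $\mathcal{Y}_0$.'' Separatedness plus density gives \emph{uniqueness} of an extension of the action map $\mathcal{Y}_0 \times_R \mathcal{G}_0 \to \mathcal{Y}_0$, not \emph{existence}: the map is a priori only a rational map defined on the generic fibre, whose complement is a divisor, so no codimension-$2$ extension argument applies (compare $x \mapsto x/\pi$ on $\mathbb{A}^1_R$). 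In fact, for an arbitrary quasi-finite flat model $\mathcal{G}_0$ of $G$ the action on the normalization genuinely fails to extend in general; finding the model that does act on the closure is precisely the content of Romagny's theory of effective models, and it is the place where the choice of $\mathcal{G}$ in the theorem is forced. The second gap is the one you flag yourself: the termination of the dilatation process is ``the real content of the theorem,'' and your proposal replaces it with a wish list of candidate invariants (``total length of the coherent torsion sheaf \dots or a suitable $\delta$-invariant'') with no definition pinned down and no argument that any of them strictly drops. A smaller but real error: a section of $\mathcal{X}$ lifts through a N\'eron blow-up centered at $C \subseteq \mathcal{X}_k$ if and only if its reduction factors through $C$, so choosing centres \emph{disjoint} from the section destroys the section rather than preserving it; you need the opposite convention.

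For contrast, the actual argument of Antei--Emsalem avoids both difficulties by a reduction-of-structure-group device. One embeds $G \hookrightarrow GL_{n,K}$ and takes $\mathcal{G}$ to be the schematic closure of $G$ in $GL_{n,R}$, which is automatically flat and quasi-finite; the contracted product $Y \times^{G} GL_{n,K}$ is (the frame bundle of) a vector bundle on $\mathcal{X}_K$, which extends to a vector bundle $\mathcal{V}$ on $\mathcal{X}$ because $\mathcal{X}$ is regular of dimension $2$ (reflexive extensions are locally free); the $G$-structure on $Y$ is then a section of $\mathcal{V}/\mathcal{G} \to \mathcal{X}$ over $\mathcal{X}_K$, and the theorem reduces to showing that such a section extends after finitely many N\'eron blow-ups of the base. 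The finiteness there comes from a structural statement about affine modifications of a two-dimensional regular $R$-scheme being dominated by finitely many dilatations, not from an ad hoc decreasing invariant. If you want to salvage your normalization-based route, you would need to import the effective-model machinery to justify the action and then prove a precise termination statement; as written, the proposal does not constitute a proof.
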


On the other hand, it is shown in \cite{Pedro} that an fppf torsor under a quasi-finite flat group scheme reduces into a torsor under a finite flat group scheme: 

\begin{thm}\cite[Theorem 12.1]{Pedro}\label{Pedro}
  Let $R$ be a discrete valuation ring which is assumed to be Henselian Japanese, such that its residue field $k$ is perfect. Let $\mathcal{X}$ be a normal, irreducible, projective and flat $R$-scheme with geometrically reduced fibres and with an $R$-section. Let $\mathcal{G}$ be a quasi-finite flat $R$-group scheme and $\mathcal{Y} \to \mathcal{X}$ a pointed fppf $\mathcal{G}$-torsor. Then, there exists a finite flat $R$-group scheme $\mathcal{H}$, a morphism $\mathcal{H} \to \mathcal{G}$ and a pointed fppf $\mathcal{H}$-torsor $\mathcal{Y}_0 \to \mathcal{X}$ such that $\mathcal{Y}_0 \times^{\mathcal{H}} \mathcal{G} \simeq \mathcal{Y}$ is pointed fppf $\mathcal{G}$-torsors.
\end{thm}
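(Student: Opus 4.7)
The plan is to chain the two cited theorems and then descend the resulting torsor back from the modification to $\mathcal{C}$. First I would apply Theorem~\ref{Ant} to the $R$-regular surface $\mathcal{C}$ (of absolute dimension $2$ and equipped with the $R$-section $\mathcal{Q}$ extending $Q$) and the pointed fppf $G$-torsor $Y \to C$. This produces a projective birational morphism $\lambda\colon \mathcal{C}_0 \to \mathcal{C}$ obtained by a finite sequence of Néron blow-ups (hence an isomorphism above the generic fibre), a quasi-finite flat $R$-group scheme $\mathcal{G}'$ with generic fibre $G$, and a pointed fppf $\mathcal{G}'$-torsor $\mathcal{Y}' \to \mathcal{C}_0$ extending $Y \to C$.

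Next I would verify the hypotheses of Theorem~\ref{Pedro} for $\mathcal{C}_0$: normality, irreducibility, projectivity and $R$-flatness are inherited from $\mathcal{C}$ through Néron blow-ups of a regular surface, the $R$-section lifts by properness of $\lambda$, and the generic fibre is the smooth curve $C$; the only remaining point is geometric reducedness of the special fibre of $\mathcal{C}_0$, which one can arrange by passing if necessary to a further normalization or suitable blow-up. Since $R$ is Henselian and Japanese with perfect residue field, applying Theorem~\ref{Pedro} to $(\mathcal{C}_0,\mathcal{G}',\mathcal{Y}')$ yields a finite flat $R$-group scheme $\mathcal{H}$, a morphism of $R$-group schemes $\mathcal{H} \to \mathcal{G}'$, and a pointed fppf $\mathcal{H}$-torsor $\mathcal{Z}' \to \mathcal{C}_0$ with $\mathcal{Z}' \times^{\mathcal{H}} \mathcal{G}' \simeq \mathcal{Y}'$.

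The final step is to descend $\mathcal{Z}'$ along $\lambda$. Since $\mathcal{H}$ is affine and finite flat, one has $\mathcal{Z}' = \underline{\Spec}_{\mathcal{C}_0}(\mathcal{A}')$ for a coherent locally free $\mathcal{O}_{\mathcal{C}_0}$-algebra $\mathcal{A}'$ endowed with an $\mathcal{H}$-coaction. Set $\mathcal{A} := \lambda_*\mathcal{A}'$ and $\mathcal{Z} := \underline{\Spec}_{\mathcal{C}}(\mathcal{A})$; away from the finitely many closed points of $\mathcal{C}$ blown up by $\lambda$, the scheme $\mathcal{Z}$ coincides with $\mathcal{Z}'$. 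If one can show that $\mathcal{A}$ remains locally free over $\mathcal{O}_\mathcal{C}$ and that its induced $\mathcal{H}$-coaction is still simply transitive, then $\mathcal{Z} \to \mathcal{C}$ is an fppf $\mathcal{H}$-torsor, and setting $\mathcal{G} := \mathcal{G}'$, $\mathcal{Y} := \mathcal{Z} \times^{\mathcal{H}} \mathcal{G}$ produces a pointed fppf $\mathcal{G}$-torsor over $\mathcal{C}$ with generic fibre $Y$, as desired.

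The main obstacle lies precisely in this last descent: pushforward along a proper birational modification does not in general preserve flatness. One is reduced to a local question at the finitely many centres of the Néron blow-ups, all of which lie in the special fibre of $\mathcal{C}$; there one would exploit the regularity of $\mathcal{C}$, the explicit local structure of Néron blow-ups, and the fppf torsor structure of $\mathcal{Z}' \to \mathcal{C}_0$ to verify that $\lambda_*\mathcal{A}'$ is locally free and that the $\mathcal{H}$-coaction descends, after which induction along $\mathcal{H} \to \mathcal{G}'$ delivers the required $\mathcal{G}$-torsor over $\mathcal{C}$ itself.
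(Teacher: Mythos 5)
Your proposal does not prove the statement in question. Theorem~\ref{Pedro} is an external result quoted from \cite[Theorem 12.1]{Pedro} (Phung--Dos Santos); the paper offers no proof of it, and proving it would require the machinery of that reference (the structure theory of quasi-finite flat group schemes over a Henselian base and the analysis of torsors on projective $R$-schemes), none of which appears in your argument. Worse, your argument is circular as a proof of this statement: in your second paragraph you explicitly \emph{apply} Theorem~\ref{Pedro} to the triple $(\mathcal{C}_0,\mathcal{G}',\mathcal{Y}')$, so the theorem you are supposed to establish is one of your hypotheses. What you have actually written is an attempted proof of Theorem~\ref{main thm} (the paper's Theorem 4.4), which is the statement that chains Theorem~\ref{Ant} and Theorem~\ref{Pedro} together.

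Even read as a proof of Theorem~\ref{main thm}, your final descent step is a genuine gap rather than a routine verification. You propose to push forward the $\mathcal{H}$-torsor algebra $\mathcal{A}'$ along the modification $\lambda\colon \mathcal{C}_0 \to \mathcal{C}$ and hope that $\lambda_*\mathcal{A}'$ stays locally free with a simply transitive coaction; you acknowledge yourself that properness of $\lambda$ gives no such control, and this is precisely the hard point that the blow-up was introduced to avoid. The paper sidesteps it entirely: once the torsor over $\mathcal{C}_0$ is reduced to a torsor under the largest finite flat subgroup $\mathcal{F} \subseteq \mathcal{G}$ (Corollary~\ref{Pedrobis}), the existence of an fppf $\mathcal{F}$-torsor over $\mathcal{C}_0$ extending the generic one is translated, via Proposition~\ref{PropN}, into the existence of an $R$-morphism $\mathcal{F}^D \to \mathcal{J}^0$ into the identity component of the N\'eron model of the Jacobian --- a condition that depends only on $C$ and not on the choice of regular model. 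Applying Proposition~\ref{PropN} in the other direction over $\mathcal{C}$ then produces the fppf $\mathcal{F}$-torsor $\mathcal{Y}' \to \mathcal{C}$ directly, and pushing out along $\mathcal{F} \to \mathcal{G}$ finishes. No sheaf-theoretic descent along $\lambda$ is ever performed.
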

We deduce from it the following:

\begin{cor}\label{Pedrobis}
   With the same notations and assumptions as in Theorem \ref{Pedro}, if $\mathcal{Y} \to \mathcal{X}$ is a pointed fppf $\mathcal{G}$-torsor and $\mathcal{F}$ the largest finite subgroup scheme inside $\mathcal{G}$, there exists a pointed fppf $\mathcal{F}$-torsor $\mathcal{Y}_0 \to \mathcal{X}$ such that $\mathcal{Y}_0 \times^{\mathcal{F}} \mathcal{G} \simeq \mathcal{Y}$ as pointed fppf $\mathcal{G}$-torsors.
       
    \end{cor}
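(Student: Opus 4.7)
The plan is to apply Theorem \ref{Pedro} and then ``push'' the resulting finite flat torsor onto $\mathcal{F}$ by associativity of contracted products. Theorem \ref{Pedro} produces a finite flat $R$-group scheme $\mathcal{H}$, a homomorphism $i: \mathcal{H} \to \mathcal{G}$, and a pointed fppf $\mathcal{H}$-torsor $\mathcal{Y}'_0 \to \mathcal{X}$ such that $\mathcal{Y}'_0 \times^{\mathcal{H}} \mathcal{G} \simeq \mathcal{Y}$ as pointed $\mathcal{G}$-torsors.

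The central step is to check that $i$ factors through $\mathcal{F}$. Since $\mathcal{H}$ is finite over the noetherian ring $R$ and $\mathcal{G}$ is separated, $i$ is finite, and its scheme-theoretic image has structure sheaf given by the $R$-subalgebra $i^*(\mathcal{O}_{\mathcal{G}}) \subseteq \mathcal{O}_{\mathcal{H}}$. As a submodule of a finitely generated, torsion-free $R$-module, this subalgebra is itself finitely generated and torsion-free, hence free over $R$; so the image is a finite and flat closed subscheme of $\mathcal{G}$. That the image is in fact a closed subgroup scheme follows from the compatibility of the comultiplication with $i^*$, since $i$ is a homomorphism of group schemes. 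By the maximality of $\mathcal{F}$ among finite subgroup schemes of $\mathcal{G}$ (the general version for quasi-finite flat group schemes of \cite[\S IX, Lemma 2.2.3]{SGA7} invoked previously for $\mathcal{J}[r]$), this image must lie in $\mathcal{F}$, producing a factorization $j: \mathcal{H} \to \mathcal{F}$ of $i$.

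Now set $\mathcal{Y}_0 := \mathcal{Y}'_0 \times^{\mathcal{H}} \mathcal{F}$, where $\mathcal{H}$ acts on $\mathcal{F}$ through $j$ and right translation. This is a pointed fppf $\mathcal{F}$-torsor over $\mathcal{X}$ (pointed by the class of $(y'_0, e_{\mathcal{F}})$, where $y'_0$ is the marked section of $\mathcal{Y}'_0$), and the associativity of the contracted product yields
\[
\mathcal{Y}_0 \times^{\mathcal{F}} \mathcal{G} \;=\; (\mathcal{Y}'_0 \times^{\mathcal{H}} \mathcal{F}) \times^{\mathcal{F}} \mathcal{G} \;\simeq\; \mathcal{Y}'_0 \times^{\mathcal{H}} \mathcal{G} \;\simeq\; \mathcal{Y}
\]
as pointed fppf $\mathcal{G}$-torsors, which is the desired conclusion. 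The only delicate point is the middle paragraph: verifying that the scheme-theoretic image of $\mathcal{H}$ in $\mathcal{G}$ really is a \emph{finite flat closed subgroup scheme}, since this is exactly what activates the defining property of $\mathcal{F}$; the rest of the argument is formal manipulation of contracted products.
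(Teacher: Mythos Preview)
Your proof is correct and follows essentially the same approach as the paper: apply Theorem~\ref{Pedro}, factor $\mathcal{H}\to\mathcal{G}$ through $\mathcal{F}$ by maximality, and conclude via associativity of contracted products (which the paper phrases as the factorization of $H^1_{fppf}(\mathcal{X},\mathcal{H})\to H^1_{fppf}(\mathcal{X},\mathcal{G})$ through $H^1_{fppf}(\mathcal{X},\mathcal{F})$). The paper asserts the factorization $\mathcal{H}\to\mathcal{F}$ in one line, whereas you supply the justification that the scheme-theoretic image is a finite flat closed subgroup scheme; this extra care is warranted and the argument you give is sound.
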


\begin{proof}
Let $\mathcal{H}$ be the finite group scheme in Theorem \ref{Pedro}. Since $\mathcal{F}$ is the largest finite subgroup of $\mathcal{G}$, $\mathcal{H} \to \mathcal{G}$ factors through $\mathcal{F}$. Therefore, the surjective map
    \begin{align*}
        H^1_{fppf}(\mathcal{X},\mathcal{H}) & \to H^1_{fppf}(\mathcal{X},\mathcal{G})\\
        \mathcal{T} &\mapsto \mathcal{T} \times^{\mathcal{H}} \mathcal{G}
    \end{align*}
factors through $H^1_{fppf}(\mathcal{X},\mathcal{F})$ in an obvious way.  
\end{proof}

\begin{thm}\label{main thm} Let $C$ be a smooth projective and geometrically connected $K$-curve with a $K$-point. Let $\mathcal{C}$ be a regular model of $C$ and $G$ a finite commutative $K$-group scheme. Let $Y \to C$ be an fppf pointed $G$-torsor. Then, there exists a quasi-finite flat group scheme $\mathcal{G}$ over $R$ with generic fiber $G$, and an fppf pointed $\mathcal{G}$-torsor over $\mathcal{C}$ that extends the $G$-torsor $Y \to C$.
\end{thm}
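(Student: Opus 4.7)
The approach is to chain together the three cited ingredients in the natural order. First, apply Antei's theorem \ref{Ant} to the regular integral $R$-surface $\mathcal{C}$ (with $R$-section $\mathcal{Q}$ extending $Q$ by properness): this yields a finite composition of Néron blow-ups $\lambda\colon \mathcal{X}_0 \to \mathcal{C}$, a quasi-finite flat $R$-group scheme $\mathcal{G}_0$ with generic fiber $G$, and an fppf pointed $\mathcal{G}_0$-torsor $\mathcal{Y}_0 \to \mathcal{X}_0$ extending $Y \to C$.

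Next, under the standing Henselian Japanese hypothesis on $R$, apply Corollary \ref{Pedrobis} to $\mathcal{Y}_0 \to \mathcal{X}_0$, after checking that $\mathcal{X}_0$ inherits from $\mathcal{C}$ the hypotheses of Theorem \ref{Pedro} (normality, irreducibility, projectivity, flatness with geometrically reduced fibres, existence of an $R$-section) --- Néron blow-ups preserving all these properties. This produces the largest finite subgroup scheme $\mathcal{F}_0 \subseteq \mathcal{G}_0$ and an fppf pointed $\mathcal{F}_0$-torsor $\mathcal{Z}_0 \to \mathcal{X}_0$ with $\mathcal{Z}_0 \times^{\mathcal{F}_0} \mathcal{G}_0 \simeq \mathcal{Y}_0$. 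Setting $F_0 := (\mathcal{F}_0)_K \subseteq G$ and $Z := (\mathcal{Z}_0)_K$ yields $Z \times^{F_0} G \simeq Y$ on the generic fiber, so $\mathcal{F}_0$ is a finite flat $R$-model of the subgroup $F_0$ of $G$ through which a reduction of $Y$ factors.

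Third, apply Theorem \ref{previ} and Proposition \ref{PropN} to this finite flat model $\mathcal{F}_0$ of $F_0$. The $K$-morphism $\phi\colon F_0^D \to J$ classifying $Z$ via \eqref{Jac} extends to an $R$-morphism $\Phi\colon \mathcal{F}_0^D \to \mathcal{J}$: the fppf extension $\mathcal{Z}_0$ over the blow-up $\mathcal{X}_0$ furnishes such an extension there, which descends uniquely over $\mathcal{C}$ via the Néron mapping property applied to $\mathcal{J}$. By Proposition \ref{PropN}, $\Phi$ then supplies a Kummer log flat $\mathcal{F}_0$-torsor $\tilde{\mathcal{Z}} \to \mathcal{C}$ extending $Z$, and the contracted product $\mathcal{Y} := \tilde{\mathcal{Z}} \times^{\mathcal{F}_0} \mathcal{G}_0$ is a klf $\mathcal{G}_0$-torsor over $\mathcal{C}$ extending $Y$.

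The main obstacle will be promoting the klf torsor $\mathcal{Y}$ to a genuine fppf torsor. By the last sentence of Proposition \ref{PropN}, the klf $\mathcal{F}_0$-torsor $\tilde{\mathcal{Z}}$ is fppf precisely when $\Phi$ factors through the identity component $\mathcal{J}^0$ of $\mathcal{J}$. I expect this factorization to hold --- possibly only after enlarging $\mathcal{G}_0$ to a larger quasi-finite flat $R$-model $\mathcal{G}$ of $G$ built from the schematic closure of $G^D$ inside $\mathcal{J}^0$ --- and verifying it (equivalently, identifying the correct quasi-finite flat $\mathcal{G}$ and confirming that $\mathcal{Y}$ becomes fppf as a $\mathcal{G}$-torsor) is the key technical step.
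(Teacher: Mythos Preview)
Your overall strategy coincides with the paper's proof: apply Theorem~\ref{Ant} to get an fppf $\mathcal{G}_0$-torsor on a blown-up model $\mathcal{X}_0$, reduce via Corollary~\ref{Pedrobis} to an fppf torsor under the finite part $\mathcal{F}_0$, then transport this back to $\mathcal{C}$ using Proposition~\ref{PropN}. The difference is that the step you flag as ``the main obstacle'' is in fact no obstacle at all, and the paper dispatches it in one line.

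The point you are missing is that the last sentence of Proposition~\ref{PropN} is an \emph{if and only if}. You already have an \emph{fppf} (not merely klf) $\mathcal{F}_0$-torsor $\mathcal{Z}_0 \to \mathcal{X}_0$; applying the ``only if'' direction of Proposition~\ref{PropN} on the model $\mathcal{X}_0$ shows immediately that the associated $R$-morphism $\mathcal{F}_0^D \to \mathcal{J}$ factors through $\mathcal{J}^0$. Since $\mathcal{J}^0$ is intrinsic to $J$ and does not depend on the choice of regular model of $C$, this same morphism $\mathcal{F}_0^D \to \mathcal{J}^0$ serves for $\mathcal{C}$ as well, and the ``if'' direction of Proposition~\ref{PropN} then produces an fppf $\mathcal{F}_0$-torsor $\tilde{\mathcal{Z}} \to \mathcal{C}$. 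The contracted product $\tilde{\mathcal{Z}} \times^{\mathcal{F}_0} \mathcal{G}_0$ is the required fppf $\mathcal{G}_0$-torsor over $\mathcal{C}$. No enlargement of $\mathcal{G}_0$ and no schematic-closure construction is needed.

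Two minor remarks on your write-up. First, the morphism $\Phi\colon \mathcal{F}_0^D \to \mathcal{J}$ is a morphism of $R$-group schemes; it does not live over $\mathcal{X}_0$ or $\mathcal{C}$, so there is no ``descent via the N\'eron mapping property'' involved in passing between models. Second, your third paragraph only concludes $\Phi\colon \mathcal{F}_0^D \to \mathcal{J}$ and then invokes Proposition~\ref{PropN} to get a klf torsor; but with $\mathcal{J}^0$ in hand from the start (as above), you get fppf directly and the detour through klf torsors is unnecessary.
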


\begin{proof}
    By Theorem \ref{Ant}, there exists a quasi-finite flat $R$-group scheme $\mathcal{G}$ that extends $G$, together with a regular model $\mathcal{C}_0$ of $C$ such that the fppf pointed $G$-torsor $Y \to C$ extends into an fppf $\mathcal{G}$-torsor $\mathcal{Y} \to \mathcal{C}_0$. By Corollary \ref{Pedrobis}, if $\mathcal{F}$ is the largest finite subgroup scheme of $\mathcal{G}$, there exists a pointed fppf $\mathcal{F}$-torsor $\mathcal{Y}_0 \to \mathcal{C}_0$ such that $\mathcal{Y}_0 \times^{\mathcal{F}} \mathcal{G}\simeq \mathcal{Y}$. Hence, the pointed fppf $\mathcal{F}_K$-torsor $\mathcal{Y}_{0,K} \to C$ extends into the pointed fppf $\mathcal{F}$-torsor $\mathcal{Y}_0 \to \mathcal{C}_0$. If $J$ denotes the Jacobian of $C$ and $\mathcal{J}^0$ the identity component of its Néron model, this is equivalent by Proposition \ref{PropN} to the fact that the associated $K$-morphism $\mathcal{F}_K^D \to J$ (cf.(\ref{Jac})) extends into an $R$-morphism $\mathcal{F}^D \to \mathcal{J}^0$. But this  implies by Proposition \ref{PropN} again that the fppf $\mathcal{F}_K$-torsor  $\mathcal{Y}_{0,K} \to C$ extends into an fppf $\mathcal{F}$-torsor $\mathcal{Y}' \to \mathcal{C}$. Hence, the $G$-torsor $Y \to C$ extends into the fppf $\mathcal{G}$-torsor $\mathcal{Y}' \times^{\mathcal{F}} \mathcal{G} \to \mathcal{C}$. 
\end{proof}

\begin{Backmatter}
\paragraph{Acknowledgment}
 The author would like to thank her thesis advisors Jean Gillibert and Dajano Tossici for their constant support, encouragements as well as the mathematical discussions with them. She would also like to thank Thibault Poiret with whom it was very fruitful to discuss and who notably helped her to understand some points of \cite{Holmes} that he co-wrote.

\def\refname{References}

\end{Backmatter}

\affauthor{Sara Mehidi}
\affiliation{Institut de Mathématiques de Bordeaux
351, cours de la Libération - F 33 405 TALENCE.
Bureau 315, IMB. France \protect\email{sarah.mehidi@math.u-Bordeaux.fr}}

\end{document}